\numberwithin{equation}{section}
\newtheorem{thm}{Theorem}[section]
\newtheorem{lemma}[thm]{Lemma}
\newcommand\A{\mathcal A}
\newcommand\B{\mathcal B}
\newcommand\CC{\mathcal C}
\newcommand\I{\mathcal I}
\newcommand\J{\mathcal J}
\newcommand\C{\mathbb C}
\newcommand\Hom{\operatorname{Hom}}
\theoremstyle{definition}
\newtheorem{example}[thm]{Example}
\begin{document}

\title{Woronowicz's Tannaka-Krein duality and free orthogonal quantum groups}

\author{Sara Malacarne}

\email{saramal@math.uio.no}

\address{Department of Mathematics, University of Oslo, P.O. Box 1053
Blindern, NO-0316 Oslo, Norway}

\thanks{Supported by the European Research Council under the European Union's Seventh
Framework Programme (FP/2007-2013)/ ERC Grant Agreement no. 307663 (PI: S. Neshveyev)}

\date{February 15, 2016}

\begin{abstract}
Given a finite dimensional Hilbert space $H$ and a collection of operators between its tensor powers satisfying certain properties, we give a category-free proof 
of the existence of a compact quantum group $G$ with a fundamental representation $U$ on $H$ such that the intertwiners between the tensor powers of~$U$ 
coincide with the given collection of operators. We then explain how the general version of Woronowicz's Tannaka-Krein duality can be deduced from this.
\end{abstract}

\maketitle

\section*{Introduction}

The aim of the paper is to give a category-free proof of Woronowicz's Tannaka-Krein duality Theorem \cite{WTK}. We consider a finite dimensional Hilbert space $H$ and a collection 
of operators between its tensor powers satisfying certain properties. Categorically speaking, we deal with a C$^*$-tensor category  with conjugates that is a subcategory of the category of finite 
dimensional Hilbert spaces, $\text{Hilb}_{f}$, and assume that such category is generated by one self-conjugate Hilbert space. We prove the existence of a compact quantum group $G$, such that its 
representation category  $\operatorname{Rep}G$ is our given category. The proof consists of an explicit 
reconstruction of the Hopf $*$-algebra $\mathbb{C}[G]$, sometimes denoted by $\operatorname{Pol}G$, generated by the coefficients of all finite dimensional representations of $G$. The 
relations defining such Hopf $*$-algebra are directly obtained  through morphisms in the category, or equivalently, through the collection of operators between tensor powers of $H$. The version of  
Woronowicz's Tannaka-Krein Theorem that we prove is essentially formulated in the paper by T. Banica and R. Speicher \cite{BS}, where the duality is used for the construction of new examples of free
quantum groups, via 
subcategories of the so-called categories of noncrossing partitions. Even though the proof here presented is, in many respects, similar to the proofs of Woronowicz's Tannaka-Krein duality appearing in 
\cite{WTK} and \cite{NT}, we wish to point out that this version is more algebraic, mostly category-free and the key part is based on simple duality statements for finite dimensional 
vector spaces. Related to this reconstruction process,  it is also important to mention P. Schauenburg's paper \cite{PS}, in which  a proof of Tannaka-Krein duality is given in a more general 
setting:  monoidal categories that are not semisimple are considered and, correspondingly, arbitrary Hopf algebras are recovered. 

The paper is structured in the following way:
in Section 1 we define a bialgebra, which we will later prove to be the Hopf $*$-algebra $\mathbb{C}[G]$. The relations 
defining this bialgebra are obtained from the collection of operators between tensor powers of $H$, denoted by $\mathcal{C}$.
In Section 2 we prove that the bialgebra defined in Section 1 can be equipped with a Hopf $*$-algebra structure. For this we first consider a smaller collection of operators, $\mathcal{C}_{F}$,
and show that they define the free orthogonal quantum group $O_F^+$, \cite{VDW}. We want to stress that nothing,
apart from the fact that $\mathbb{C}[O_{F}^{+}]$ is a well-defined Hopf $*$-algebra, is used.
In Section 3  we prove the equivalence between the C$^*$-tensor category generated by one selfdual Hilbert space $H$  and $\operatorname{Rep}G$. In Section 4 we show how the particular case analysed 
in Section 3 can be extended to the general case of a not necessarily finitely generated C$^*$-tensor category.

\bigskip

 \textbf{\textit{Acknowledgments.}} I would like to thank my supervisor Sergey Neshveyev for his help and precious advice throughout this work.
 I am grateful to Teodor Banica for useful suggestions. Thanks also to Marco Matassa for fruitful discussions.

\section{Singly generated categories of Hilbert spaces}

Our goal is to prove the following version of Woronowicz's Tannaka-Krein duality.

\begin{thm}\label{tmain}
Let $H$ be a finite dimensional Hilbert space. Suppose we are given a collection $\CC$ of spaces $\CC(k,l) $ of operators $H^{\otimes k}\to H^{\otimes l}$ for all $k,l\ge0$ satisfying the following 
properties:
\begin{itemize}
\item[{\rm (1)}] if $T,S\in\CC$, then $T\otimes S\in\CC$;
\item[{\rm (2)}] if $T,S\in\CC$ are composable, then $TS\in\CC$;
\item[{\rm (3)}] $T\in\CC$ implies $T^*\in\CC$;
\item[{\rm (4)}] $\CC(k,k)$ contains the identity operator for all $k\ge0$;
\item[{\rm (5)}] $\CC(0,2)$ contains an operator $R$ such that $(R^*\otimes \iota)(\iota\otimes R)=\pm\iota$ on $H$.
\end{itemize}

Then there exists a unique up to isomomorphism compact quantum group $G$ with a self-conjugate fundamental representation $U$ on $H$ such that $\Hom_G(H^{\otimes k},H^{\otimes l})=\CC(k,l)$ for all 
$k,l\ge0$.
\end{thm}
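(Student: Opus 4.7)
My plan is to follow the four-section strategy announced in the introduction. Fix an orthonormal basis $\{e_i\}_{i=1}^n$ of $H$ and let $\A$ be the universal unital $*$-algebra generated by symbols $u_{ij}$, $1\le i,j\le n$, subject to the following relations: for each pair $(k,l)$ and each $T\in\CC(k,l)$, write out the matrix identity $TU^{\otimes k}=U^{\otimes l}T$ (with $U=(u_{ij})$) coordinate by coordinate and impose these as relations in $\A$. Conditions (1)--(4) ensure that the coproduct $\Delta(u_{ij})=\sum_k u_{ik}\otimes u_{kj}$ and counit $\varepsilon(u_{ij})=\delta_{ij}$ descend to $\A$, turning it into a bialgebra on which $U$ is a corepresentation with $\CC(k,l)$ contained in its intertwiner spaces by design. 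This is the content of Section 1.

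For the Hopf $*$-algebra structure of Section 2, condition (5) is the key input. Let $\CC_F$ be the smallest subcollection of $\CC$ that satisfies (1)--(4) and contains the single operator $R$; the bialgebra $\A_F$ obtained from $\CC_F$ by the construction of the previous paragraph is, after identifying $R$ with the matrix $F$ defined by $R=\sum_{i,j}F_{ij}\,e_i\otimes e_j$, precisely the polynomial algebra $\C[O_F^+]$ of Van Daele and Wang's free orthogonal quantum group, which is already known to be a Hopf $*$-algebra. Since $\CC_F\subseteq\CC$, there is a canonical surjection $\C[O_F^+]\twoheadrightarrow\A$, and using (1)--(3) one checks that its kernel is stable under the antipode and $*$-involution, so $\A$ inherits a Hopf $*$-algebra structure. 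Woronowicz's existence criterion then produces a compact quantum group $G$ with $\C[G]\cong\A$ and self-conjugate fundamental representation $U$.

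The main obstacle is the reverse containment $\Hom_G(H^{\otimes k},H^{\otimes l})\subseteq\CC(k,l)$, carried out in Section 3. I would prove it via the ``simple duality statements for finite-dimensional vector spaces'' promised in the introduction. Setting $V_{k,l}=\Hom(H^{\otimes k},H^{\otimes l})$, each $T\in\CC(k,l)$ contributes an element $\rho(T)\in V_{k,l}\otimes\C[O_F^+]$ whose vanishing in $\A$ defines the quotient; dually, an operator $S\in V_{k,l}$ intertwines $U^{\otimes k}$ and $U^{\otimes l}$ over $\A$ iff a natural pairing of $S$ with every $\rho(T)$ vanishes in $\C[O_F^+]$. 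The closure axioms (1)--(4) make this annihilator precisely $\CC(k,l)$, via a finite-dimensional rank argument that is the heart of the proof.

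Uniqueness of $G$ up to isomorphism follows from the universal property of $\A$: any other compact quantum group with the prescribed intertwiner spaces yields a bialgebra map out of $\A$, and comparison of coefficient algebras gives the inverse. Finally, Section 4 extends the theorem to arbitrary, not necessarily finitely generated, C$^*$-tensor categories with conjugates by exhausting them with their finitely generated selfdual subcategories and taking an inductive limit of the compact quantum groups produced above.
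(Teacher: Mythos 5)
Your architecture coincides with the paper's: build a bialgebra from the relations $(T\otimes 1)U^{\otimes k}=U^{\otimes l}(T\otimes 1)$, import the Hopf $*$-structure from $\C[O_F^+]$ via the surjection $\C[O_F^+]\twoheadrightarrow \A/\I$ after checking that its kernel is a Hopf $*$-ideal, and determine the defining ideal from the intertwiner spaces for uniqueness. The genuine gap is in your third paragraph, the inclusion $\Hom_G(H^{\otimes k},H^{\otimes l})\subseteq\CC(k,l)$, which you reduce to an unspecified ``finite-dimensional rank argument.'' Duality alone does not suffice here. The argument actually needs the following chain. First, with $\A_n=\bigoplus_{k\le n}B(H^{\otimes k})^*$ and $\B_n=\bigl(\bigoplus_{k,l\le n}\CC(k,l)\bigr)'$, the span of the relations in degrees $\le n$ is exactly the annihilator $\I_n$ of $\B_n$, so $\A_n/\I_n\cong\B_n^*$ and the image of $\C[G]^*$ under $\phi\mapsto(\iota\otimes\phi)\bigl(\bigoplus_k U^{\otimes k}\bigr)$ is precisely $\B_n$. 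Second, for any finite-dimensional representation $V$ one has $\pi_V(\C[G]^*)=\operatorname{End}_G(H_V)'$. Third --- and this is where your ``the annihilator is precisely $\CC(k,l)$'' claim really lives --- axioms (2), (3), (4) make $\bigoplus_{k,l\le n}\CC(k,l)$ a unital $*$-subalgebra of $B\bigl(\bigoplus_{k\le n}H^{\otimes k}\bigr)$, so the bicommutant theorem for finite-dimensional von Neumann algebras gives $\B_n'=\bigoplus_{k,l\le n}\CC(k,l)$, and comparing with $\operatorname{End}_G\bigl(\bigoplus_k H^{\otimes k}\bigr)=\pi(\C[G]^*)'=\B_n'$ closes the loop. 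For a general linear subspace the double commutant (equivalently, your double annihilator) is strictly larger, so no rank count replaces this step; it is the one place where the $*$-algebra structure of $\CC$ is indispensable.

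A smaller omission concerns uniqueness: it does not follow formally from a universal property. Given $G'$ with $\Hom_{G'}(H^{\otimes k},H^{\otimes l})=\CC(k,l)$ you do get a surjection $\A/\I\to\C[G']$, but to invert it you must show that every relation holding in $\C[G']$ is a consequence of the intertwiner relations, i.e.\ that the ideal $\I'$ with $\C[G']\cong\A/\I'$ satisfies $\I'_n=\B_n^{\perp}=\I_n$; this is again the duality-plus-bicommutant argument above, not a separate comparison of coefficient algebras.
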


In the last section we will discuss how the general form of Woronowicz's Tannaka-Krein duality can be easily deduced from this.

\smallskip

Denote by $\mathcal{A}$ the tensor algebra of the space of linear functionals on $B\left(H\right)$,
 i.e., $$\mathcal{A}:=T\left(B\left(H\right)^{*}\right)=\bigoplus\limits_{k=0}^{\infty} B\left(H^{\otimes k}\right)^{*}.$$  Let $U \in B\left(H\right) \otimes B(H)^*\subset B\left(H\right) \otimes 
 \mathcal{A}$ be the ``fundamental matrix'' of $\mathcal{A}$, so $U$ is characterized  by the property
\begin{equation} \label{efund}
(\iota\otimes T)(U)=T\ \ \forall T\in B(H).
\end{equation}
In other words,
$$U:= \sum\limits_{i,j} e_{ij} \otimes u_{ij},$$
where the $e_{ij}$'s are matrix units in $B\left(H\right)$ and $\{u_{ij}\}_{i,j}$ is the dual basis of $B\left(H\right)^{*}$ such that $u_{ij}\left(e_{kl}\right)=\delta_{ik}\delta_{jl}$. The tensor 
algebra $\mathcal{A}$ is a bialgebra  with comultiplication $\Delta$ defined by duality from the multiplication on $B(H)$, so that
$$\Delta\left(u_{ij}\right)=\sum\limits_{k} u_{ik}\otimes u_{kj},$$
or equivalently, using the leg-numbering notation, $(\iota \otimes \Delta)(U)=U_{12}U_{13}$.

Next, denote by $\mathcal{A}_{n}\subset\A$ the subspace given by
$$\mathcal{A}_{n}:=\bigoplus\limits_{k=0}^{n} B\left(H^{\otimes k}\right)^{*} = \left(\bigoplus\limits_{k=0}^{n} B\left(H^{\otimes k}\right)\right)^{*},$$ and denote by $\mathcal{B}_{n}$ the commutant
$$\mathcal{B}_{n} := \left( \bigoplus\limits_{k,l=0}^{n} \CC(k,l)
\right)^{\prime}\subseteq \bigoplus_{k=0}^{n} B(H^{\otimes k})\subset B\left(\bigoplus_{k=0}^{n} H^{\otimes k}\right).$$

Finally, let $$\mathcal{I}_{n} := \left\lbrace a \in \mathcal{A}_{n} \; : \; \left.a\right|_{\mathcal{B}_{n}}=0\right\rbrace,$$ and denote by $\mathcal{I}$ the union
$\mathcal{I}:= \bigcup_{n=0}^{\infty} \mathcal{I}_{n}$. Note that $\mathcal{I}_{n+1} \cap \mathcal{A}_{n}=\mathcal{I}_{n}$, so $\mathcal{I}$ is a subspace of $\A$.

\begin{lemma} \label{lem1}
$\mathcal{I}$ is a bi-ideal in the bialgebra $\mathcal{A}$.
\end{lemma}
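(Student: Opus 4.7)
The plan is to first find an explicit spanning set for $\mathcal{I}$ in which both the ideal and coideal conditions become transparent, then verify them on generators. I identify $B(H^{\otimes k})$ with its dual through the trace, writing $\phi_b^{(k)}(x) := \mathrm{tr}(bx)$, equivalently $\phi_b^{(k)} = (\mathrm{tr}(b\,\cdot)\otimes\iota)(u^{(k)})$ with $u^{(k)} := U^{\otimes k}$. For $T\in\CC(k,l)$ and $c\in B(H^{\otimes l},H^{\otimes k})$ set
$$\xi(T,c) := \phi_{cT}^{(k)} - \phi_{Tc}^{(l)} = (\mathrm{tr}(c\,\cdot)\otimes\iota)\bigl(Tu^{(k)} - u^{(l)}T\bigr)\in\mathcal{A}.$$
My claim is that $\mathcal{I}$ is spanned by these elements: for $b=(b_j)\in\mathcal{B}_n$ the intertwining $Tb_k=b_lT$ together with cyclicity of trace gives $\xi(T,c)(b) = \mathrm{tr}(cTb_k) - \mathrm{tr}(Tcb_l) = \mathrm{tr}(c(Tb_k-b_lT)) = 0$, so $\xi(T,c)\in\mathcal{I}_n$; conversely, if $b\in\bigoplus_{j\le n}B(H^{\otimes j})$ annihilates every such $\xi$, non-degeneracy of the trace in $c$ forces $Tb_k=b_lT$ for all $T\in\CC(k,l)$ with $k,l\le n$, hence $b\in\mathcal{B}_n$. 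Taking the union over $n$, $\mathcal{I}=\mathrm{span}\{\xi(T,c)\}$.

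For the ideal property, the multiplication on trace-dual generators reads $\phi_a^{(m)}\phi_b^{(k)} = \phi_{a\otimes b}^{(m+k)}$, so a direct computation gives
$$\phi_a^{(m)}\,\xi(T,c) = \xi(\iota_m\otimes T,\ a\otimes c),\qquad \xi(T,c)\,\phi_a^{(m)} = \xi(T\otimes\iota_m,\ c\otimes a).$$
By conditions (1) and (4), $\iota_m\otimes T$ and $T\otimes\iota_m$ both lie in $\CC$, so both products remain of the form $\xi(T',c')$ and hence in $\mathcal{I}$; by bilinearity this covers $\mathcal{A}\cdot\mathcal{I}$ and $\mathcal{I}\cdot\mathcal{A}$.

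For the coideal property, I use the defining identity $(\iota\otimes\Delta)(u^{(k)}) = u^{(k)}_{12}u^{(k)}_{13}$ to telescope
$$(\iota\otimes\Delta)\bigl(Tu^{(k)} - u^{(l)}T\bigr) = \bigl(Tu^{(k)} - u^{(l)}T\bigr)_{12}\,u^{(k)}_{13} + u^{(l)}_{12}\,\bigl(Tu^{(k)} - u^{(l)}T\bigr)_{13}.$$
Applying $(\mathrm{tr}(c\,\cdot)\otimes\iota\otimes\iota)$ and unpacking the slot-wise products, each matrix coefficient of $Tu^{(k)}-u^{(l)}T$---which lies in $\mathcal{I}$ by the first step---appears either in slot~2 of the first term or slot~3 of the second, yielding $\Delta(\xi(T,c))\in\mathcal{I}\otimes\mathcal{A} + \mathcal{A}\otimes\mathcal{I}$. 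The main obstacle is the first step: pinning down the spanning description of $\mathcal{I}$ through the trace pairing and the non-degeneracy argument identifying $\mathcal{B}_n$ as the joint annihilator of the $\xi$'s. Once that is in hand, the ideal and coideal verifications become symbolic consequences of conditions (1)--(4) together with the bialgebra axiom for $U$.
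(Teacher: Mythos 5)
Your proposal is correct and follows essentially the same route as the paper: both rest on identifying $\mathcal{I}$ with the span of the slices of $(T\otimes 1)U^{\otimes k}-U^{\otimes l}(T\otimes 1)$ via finite-dimensional duality, and both obtain the coideal property from the same telescoping decomposition of $(\iota\otimes\Delta)$ applied to that element. The only cosmetic difference is that you check the ideal property on the spanning set through the explicit product formula $\phi_a^{(m)}\xi(T,c)=\xi(\iota_m\otimes T,\,a\otimes c)$, whereas the paper argues directly from the commutant definition of $\mathcal{I}_n$; both are immediate.
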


\begin{proof}
We will first prove that $\mathcal{I}$ is an ideal. 
Assume $a\in \mathcal{I}_{n}$ and $b\in B(H^{\otimes m})^{*}$;$\;$ we have to check that $a\otimes b$ vanishes on $\left(\oplus_{k,l=m}^{n+m}\CC(k,l)\right)^{\prime}$. Since
$\left(\oplus_{k,l=m}^{n+m}\CC(k,l)\right)^{\prime}\subseteq \left( \oplus_{k,l=0}^{n}\CC(k,l) \right)^{\prime}\otimes \left( \oplus_{k,l=0}^{m}\CC(k,l) \right)^\prime$, the statement simply follows 
from 
the assumption that $a$ vanishes on $\left( \oplus_{k=0}^{n}\CC(k,l) \right)^{\prime}$. 

To prove that $\mathcal{I}$ is a coideal we have to show that $\Delta(\mathcal{I}) \subseteq 
\mathcal{I}\otimes \mathcal{A}+\mathcal{A}\otimes \mathcal{I}$.  For this purpose we use an equivalent definition of $\mathcal{I}$, that is, we consider the space spanned by  the slices
\begin{equation}\label{eslice}
(\omega \otimes \iota) \left( (T\otimes 1)U^{\otimes k}-U^{\otimes l}(T \otimes 1)\right)
\end{equation}
 for all $\omega \in B(H^{\otimes k}, H^{\otimes l})^{*}$, $T \in \CC(k,l)$ and $k,l\ge0$. This space indeed coincides with $\I$, since using \eqref{efund}
 we see that an operator $S\in\oplus_{k=0}^{n} B(H^{\otimes k})$ vanishes on the elements \eqref{eslice} for all $k,l\le n$ if and only it lies in $\B_n$.
We choose an orthonormal basis of $H^{\otimes k}$, $\{\xi_{j}\}_{j}$, and of $H^{\otimes l}$, $\{\eta_{i}\}_{i}$, and assume $\omega$ is of the form
$\omega_{ij}=\left\langle\; \cdot\; \xi_{j},\eta_{i}\right\rangle$. We set $V:=U^{\otimes k}$ and $W:=U^{\otimes l}$. Then, using the leg-numbering notation,
$$\Delta(\omega_{ij}\otimes \iota)\left((T\otimes 1)V-W(T\otimes 1)\right)$$
is equal to $$(\omega_{ij} \otimes \iota\otimes \iota)\left((T\otimes 1 \otimes 1)V_{12}V_{13}-W_{12}W_{13}(T\otimes 1 \otimes 1)\right).$$
The expression in the parentheses can be written as $$\left((T\otimes 1 \otimes 1)V_{12}- W_{12}(T \otimes 1 \otimes 1)\right)V_{13}+W_{12}
\left((T\otimes 1 \otimes 1)V_{13}-W_{13}(T\otimes 1 \otimes 1)\right).$$
Now, if we just consider the first part of the sum
$$(\omega_{ij}\otimes \iota\otimes \iota)\left(\left((T\otimes 1 \otimes 1)V_{12}- W_{12}(T \otimes 1 \otimes 1)\right)V_{13}\right),$$
it can be expressed as $$\sum_{k} (\omega_{ik}\otimes \iota\otimes \iota) \left((T\otimes 1\otimes 1)V_{12}-W_{12}(T\otimes 1\otimes 1) \right)\;(\omega_{kj}\otimes \iota\otimes \iota)(V_{13}),$$
which belongs to $\mathcal{I}\otimes \mathcal{A}$. Similarly the other part lies in $\mathcal{A}\otimes \mathcal{I}$, as wanted.

\end{proof}

By the previous Lemma, $\mathcal{A}/\mathcal{I}$ is a bialgebra. What we wish to prove is that $\mathcal{A}/\mathcal{I}\cong \mathbb{C}[G]$,
for a compact quantum group $G$, and to do so we need for $\mathcal{A}/\mathcal{I}$ to be a Hopf $*$-algebra and for $U$ to be unitary (see Theorem 1.6.7 of \cite{NT}),
and this is not obvious written in this manner. In fact it is not even clear whether $\mathcal{A}/\mathcal{I}$
has a $*$-structure. We shall proceed with an intermediate step. The idea is the following: we will introduce another bi-ideal in $\mathcal{A}$, $\mathcal{I}_{F}$,
and show that $\mathcal{A}/\mathcal{I}_{F} \cong \mathbb{C}[O_{F}^{+}]$, where $O_{F}^{+}$ is the free orthogonal quantum group. Thus, $\mathcal{A}/\mathcal{I}_{F}$ will automatically inherit a Hopf 
$*$-algebra structure. Finally we will show that $\mathcal{I}/\mathcal{I}_{F}$ is a Hopf $*$-ideal in $\mathcal{A}/\mathcal{I}_{F}$ and then conclude that there exists a compact quantum group $G$ 
such that $$\mathcal{A}/\mathcal{I} \cong \mathcal{A}/\mathcal{I}_{F} \Big/ \mathcal{I}/\mathcal{I}_{F} \cong \mathbb{C}[G],$$ again by Theorem 1.6.7 of \cite{NT}.

\bigskip

\section{Representation category of a free orthogonal quantum group}

Following the strategy described above, we now consider the case when $\CC$ is the smallest collection of spaces as in Theorem~\ref{tmain} containing a fixed operator $R\colon\C\to H^{\otimes 2}$ such 
that $(R^*\otimes\iota)(\iota\otimes R)=\pm\iota$. It is known, and not difficult to see, that if we fix an orthonormal basis $e_1,\dots,e_n$ in $H$, then $R$ has the form $(\iota\otimes F)r$, where 
$r\colon\C\to H\otimes H$ is given by $r(1)=\sum_i e_i\otimes e_i$ and $F\in GL_{n}(\mathbb{C})$ is such that $F\bar{F}=\pm1$, where $\bar F$ is the matrix obtained from $F$ by taking the complex 
conjugate of every entry. We will use the subindex $F$ for the constructions of the previous section related to this smallest collection, so we write $\CC_F$, $\B_{F,n}$, $\I_{F,n}$, etc.

Consider the universal unital algebra $\C[O_{F}^{+}]$ generated by entries of a matrix $U=(u_{ij})_{i,j}$ satisfying the relations
$$
UF^{t}U^{t}(F^{-1})^{t}=1,\ \ F^{t}U^{t}(F^{-1})^{t}U=1.
$$
for an invertible $n$ by $n$ matrix $F$. It is again known and easy to see that this is a Hopf $*$-algebra with comultiplication $\Delta(u_{ij})=\sum_{k} u_{ik}\otimes u_{kj}$ and involution given by
$U^*=F^{t}U^{t}(F^{-1})^{t}$. The compact quantum group $O_{F}^{+}$ thus defined is known in literature as the free orthogonal quantum group \cite{VDW}, but we do not need to know any properties of 
this quantum group apart from the fact that it is well-defined. The following Lemma is a simple consequence of our definitions.

\begin{lemma} \label{lorth}
We have $\mathcal{A}/\mathcal{I}_{F} \cong \mathbb{C}[O^{+}_{F}]$.
\end{lemma}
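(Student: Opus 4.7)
The plan is to construct mutually inverse bialgebra maps $\rho\colon \C[O_F^+] \to \A/\I_F$ and $\bar\pi\colon \A/\I_F \to \C[O_F^+]$, both fixing the generators $u_{ij}$. For $\rho$ I would verify that the two defining relations of $\C[O_F^+]$ hold in $\A/\I_F$. Since $R\in\CC_F(0,2)$, all slices $(\omega\otimes\iota)\bigl((R\otimes 1)-U^{\otimes 2}(R\otimes 1)\bigr)$ vanish modulo $\I_F$. Writing $R(1)=\sum_{i,j}F_{ji}\,e_i\otimes e_j$ and reading off the coefficient of each $e_k\otimes e_l$ in this identity of $B(H^{\otimes 2})\otimes\A$ gives the matrix identity $F^t=UF^tU^t$, i.e.\ $UF^tU^t(F^{-1})^t=1$ in $\A/\I_F$. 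The analogous computation for $R^*\in\CC_F(2,0)$ yields $U^t\bar F^t U=\bar F^t$, and converting $\bar F$ into $\pm F^{-1}$ via condition~(5) in its coordinate form $F\bar F=\pm 1$ turns this into the second relation $F^tU^t(F^{-1})^tU=1$. The universal property of $\C[O_F^+]$ then supplies $\rho$.

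For $\bar\pi$ I would start from the unique algebra map $\pi\colon\A\to\C[O_F^+]$ sending $u_{ij}\mapsto u_{ij}$, which exists for free because $\A$ is the tensor algebra on $B(H)^*$, and check $\I_F\subseteq\ker\pi$. Equivalently, every $T\in\CC_F(k,l)$ should intertwine the fundamental corepresentation of $O_F^+$. Let $\CC'$ denote the collection of all such intertwiners and observe that $\CC'$ satisfies the five closure properties of Theorem~\ref{tmain}: tensor product, composition, and identities are formal; condition~(5) for $R$ holds by assumption; and adjoint closure reduces to the unitarity of $U$ in $\C[O_F^+]$, which is exactly the content of the defining relations $UU^*=U^*U=1$ for $U^*:=F^tU^t(F^{-1})^t$. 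Reversing the index chase of the previous paragraph, the first defining relation says precisely that $R\in\CC'$; by minimality of $\CC_F$ we conclude $\CC_F\subseteq\CC'$, so $\I_F\subseteq\ker\pi$ and $\pi$ descends to $\bar\pi$.

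Both compositions $\bar\pi\rho$ and $\rho\bar\pi$ fix the respective generators, hence are identity maps, and $\bar\pi$ is an algebra isomorphism; since both sides carry the same coproduct $\Delta(u_{ij})=\sum_k u_{ik}\otimes u_{kj}$ on generators, it is automatically a bialgebra isomorphism. The only real work is the index chase matching the intertwiner identities for $R$ and $R^*$ with the two presented relations; all other ingredients (closure of intertwiners under the five operations, together with the universal properties of the tensor algebra and of $\C[O_F^+]$) are formal, and the main point where one has to be careful is the use of $F\bar F=\pm 1$ to eliminate the complex conjugate coming from $R^*$.
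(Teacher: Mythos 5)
Your proof is correct and takes essentially the same route as the paper: your two mutually inverse maps $\rho$ and $\bar\pi$ correspond exactly to the paper's two ideal inclusions $\mathcal{L}\subseteq\I_F$ (established by the same coefficient computation identifying the relations $UF^tU^t(F^{-1})^t=1$ and $F^tU^t(F^{-1})^tU=1$ with slices of the intertwiner identities for $R$ and $R^*$, using $F\bar F=\pm1$) and $\I_F\subseteq\mathcal{L}$ (established by the same minimality argument, since the intertwiners of tensor powers of the fundamental representation of $O_F^+$ form a collection satisfying the hypotheses of Theorem~\ref{tmain} and containing $R$). The only difference is presentational, and you are in fact slightly more explicit than the paper in checking the closure properties of the intertwiner spaces.
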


\begin{proof}
By definition, the  bialgebra $\mathbb{C}[O^{+}_{F}]$  can be written as  $\mathcal{A}/\mathcal{L}$ where $\mathcal{L}$ is the ideal generated by the elements
 $$\left(F^{t}-(UF^{t}U^{t})\right)_{ij},\ \ \left((F^{-1})^{t}-U^{t}(F^{-1})^{t}U\right)_{ij}\ \ \ \forall i,j    $$
  with $U\in B(H)\otimes \mathcal{A}$ being the fundamental matrix of $\mathcal{A}$.
In order to prove the Lemma we need to show that $\mathcal{I}_{F}=\mathcal{L}$. To show that $\mathcal{L}\subseteq \mathcal{I}_{F}$, consider the linear functionals
$\omega_{1,ij}:=\left\langle\;\cdot\;1,e_{i}\otimes e_{j}\right\rangle\in B(\mathbb{C},H^{\otimes 2})^{*}$ and $\omega_{ij,1}:=\left\langle\;\cdot\;e_{i}\otimes e_{j},1\right\rangle \in
B(H^{\otimes 2},\mathbb{C})^{*}$. Then
$$\left(F^{t}-(UF^{t}U^{t})\right)_{ij}=(\omega_{1,ij}\otimes \iota)\left((R\otimes \iota)-U^{\otimes 2}(R\otimes \iota)    \right)$$
and
$$ \left((F^{-1})^{t}-U^{t}(F^{-1})^{t}U \right)_{ij}= \pm(\omega_{ij,1}\otimes \iota)\left((R^{*}\otimes \iota)-  (R^{*}\otimes \iota)U^{\otimes 2}  \right),  $$
where we recall that $R=(\iota \otimes F)r$ and in the second equality we use that $F^{*}=\pm(F^{-1})^{t}$. Hence $\mathcal{L}\subseteq \mathcal{I}_{F}$.

Conversely, let us show that $\mathcal{I}_{F}\subseteq \mathcal{L}$. As follows from the above identities, $R$ and $R^{*}$ are morphisms in the category $\operatorname{Rep}O_{F}^{+}$. It follows that 
any operator in $\CC_F$ is a morphism in $\operatorname{Rep}O_{F}^{+}$. But this implies $\mathcal{I}_{F}\subseteq \mathcal{L}$ because any
relation defined by elements of $\mathcal{C}_{F}$  has to be satisfied  in $\mathbb{C}[O_{F}^{+}]$.

\end{proof}

Therefore in order to prove Theorem~\ref{tmain} for $\CC=\CC_F$ it remains to establish the following Lemma. We remark that the Lemma itself is not needed for the general case, but its proof will be 
reused.

\begin{lemma} \label{lorth2}
We have $\Hom_{O^+_F}(H^{\otimes k},H^{\otimes l})=\CC_F(k,l)$ for all $k,l\ge0$.
\end{lemma}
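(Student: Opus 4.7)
The inclusion $\CC_F(k,l) \subseteq \Hom_{O_F^+}(H^{\otimes k},H^{\otimes l})$ is already contained in the closing argument of the proof of Lemma~\ref{lorth}, so the task reduces to proving the reverse inclusion. My plan is to unwind the intertwiner condition through the isomorphism $\mathbb{C}[O_F^+] \cong \A/\I_F$ supplied by Lemma~\ref{lorth}, and then invoke the finite-dimensional double commutant theorem.

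Fix $T \in B(H^{\otimes k},H^{\otimes l})$. Via Lemma~\ref{lorth}, $T$ is an intertwiner iff $(T \otimes 1)U^{\otimes k} = U^{\otimes l}(T \otimes 1)$ in $B(H^{\otimes k},H^{\otimes l}) \otimes (\A/\I_F)$, equivalently every slice $(\omega \otimes \iota)\bigl((T \otimes 1)U^{\otimes k} - U^{\otimes l}(T \otimes 1)\bigr)$ with $\omega \in B(H^{\otimes k},H^{\otimes l})^*$ lies in $\I_F$. For any $n \ge \max(k,l)$ the slice sits in $\A_n$ and $\I_F \cap \A_n = \I_{F,n}$, so the condition becomes vanishing on $\B_{F,n}$. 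A direct computation from \eqref{efund} extended to tensor powers (which yields $(\iota \otimes \operatorname{ev}_{S'})(U^{\otimes k}) = S'$ for $S' \in B(H^{\otimes k})$) shows that evaluating the slice at $S = (S_0,\dots,S_n) \in \bigoplus_{j=0}^n B(H^{\otimes j})$ produces $\omega(TS_k - S_l T)$. Therefore the intertwiner condition on $T$ is precisely $TS_k = S_l T$ for every $S \in \B_{F,n}$, i.e.\ the lift $\tilde T \in B\bigl(\bigoplus_{j=0}^n H^{\otimes j}\bigr)$, supported only between the $k$-th and $l$-th summands, commutes with $\B_{F,n}$.

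Finally, properties (2), (3), (4) of Theorem~\ref{tmain} promote $\bigoplus_{p,q=0}^n \CC_F(p,q)$ to a unital $*$-subalgebra of $B\bigl(\bigoplus_{j=0}^n H^{\otimes j}\bigr)$, so the double commutant theorem gives $\B_{F,n}' = \bigoplus_{p,q=0}^n \CC_F(p,q)$. Reading off the $(k,l)$-component of $\tilde T \in \B_{F,n}'$ then yields $T \in \CC_F(k,l)$, as desired. The only delicate step is the pairing identity expressing the slice as $\omega(TS_k - S_l T)$; once that bookkeeping is carried out, the remainder is immediate from finite-dimensional double commutant, and I do not foresee serious obstacles.
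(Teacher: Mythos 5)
Your proposal is correct and follows essentially the same route as the paper: both arguments come down to the duality $\mathcal{A}_n/\mathcal{I}_{F,n}\cong\mathcal{B}_{F,n}^*$ (i.e.\ $\mathcal{I}_{F,n}$ is the annihilator of $\mathcal{B}_{F,n}$) together with the finite-dimensional double commutant theorem applied to the unital $*$-algebra $\bigoplus_{p,q=0}^n\CC_F(p,q)$. The only difference is presentational: where the paper cites the standard fact $\pi_V(\C[G]^*)=\operatorname{End}_G(H_V)'$ and checks $\pi_{\oplus_k U^{\otimes k}}(\C[O_F^+]^*)=\mathcal{B}_{F,n}$, you inline that fact by computing the slices $\omega(TS_k-S_lT)$ directly, which is a correct and self-contained unwinding of the same argument.
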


\begin{proof}
We have to show that for every $n$
 $$\bigoplus\limits_{k,l=0}^{n}\CC_F(k,l)=\text{End}_{O_{F}^{+}}\left( \bigoplus\limits_{k=0}^{n}H^{\otimes k}\right).$$
As both sides are (finite dimensional) von Neumann algebras, the equality is equivalent to the equality of their commutants:
$$\B_{F,n}=\text{End}_{O_{F}^{+}}
\left( \bigoplus\limits_{k=0}^{n}H^{\otimes k}\right)^{\prime}.$$

Recall now that for
any finite dimensional representation $V\in B(H_{V})\otimes \mathbb{C}[G]$ of a compact quantum group~$G$ we have a representation $\pi_V$ of the algebra $\C[G]^*$ on 
$H_V$ defined by $\pi_V(\phi)=(\iota\otimes\phi)(V)$, and then $\pi_{V}(\mathbb{C}[G]^{*})=\text{End}_{G}(H_{V})^{\prime}$. Therefore we have to show that
$\pi_{\bigoplus_{k=0}^{n} U^{\otimes k}}\left(\mathbb{C}[O_{F}^{+}]^{*}\right)=\mathcal{B}_{F,n}$.
But this immediately follows from the previous Lemma, as $\C[O^+_F]=\mathcal{A}/\mathcal{I}_{F}\supset\mathcal{A}_{n}/\mathcal{I}_{F,n}=\mathcal{B}^{*}_{F,n}$.

\end{proof}

\bigskip

\section{Proof of the Theorem}

We now turn to a general $\CC$ as in Theorem~\ref{tmain}. Let $R\in\CC(0,2)$ be an operator such that $(R^*\otimes\iota)(\iota\otimes R)=\pm\iota$. As in the previous section, we fix an orthonormal 
basis $e_1,\dots,e_n$ in $H$ and write $R$ as $(\iota\otimes F)r$. Denote by $\J$ the bi-ideal $\I/\I_F$ in $\C[O_F^+]=\A/\I_F$. Note that $\J$ can still be described as the space spanned by the 
elements
$ (\omega\otimes \iota)\left((T\otimes 1)U^{\otimes k}-U^{\otimes l}(T \otimes 1)\right)$, for $T\in \CC(k,l)$ and $\omega\in B(H^{\otimes k},H^{\otimes l})^*$, where we slightly abuse the notation 
and denote by the same symbol $U$ the fundamental matrix of $\A$ and its image in $\C[O_F^+]$.

\begin{lemma}
 $\mathcal{J}$ is a Hopf $*$-ideal in $\mathbb{C}[O_{F}^{+}]$.
\end{lemma}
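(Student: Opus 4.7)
The plan is as follows. By Lemma~\ref{lem1} the set $\I$ is a bi-ideal in $\A$, so its image $\J=\I/\I_F$ is automatically a bi-ideal in $\C[O_F^+]$; it remains only to verify that $\J$ is stable under the involution $*$ and under the antipode $S$ of $\C[O_F^+]$. Both verifications rest on the following slicing remark, immediate from the two-sided ideal property of $\J$: if $B\in B(H^{\otimes p},H^{\otimes q})\otimes \C[O_F^+]$ has all its slices $(\omega\otimes\iota)(B)$ in $\J$, then the same holds for $YBX$ for any $X\in B(H^{\otimes p'},H^{\otimes p})\otimes \C[O_F^+]$ and $Y\in B(H^{\otimes q},H^{\otimes q'})\otimes \C[O_F^+]$.

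Fix a generator $x=(\omega\otimes\iota)(B_T)$ of $\J$, where $B_T:=(T\otimes 1)U^{\otimes k}-U^{\otimes l}(T\otimes 1)$ and $T\in\CC(k,l)$. For $*$-invariance I would begin from the identity $B_T^*=(U^{\otimes k})^*(T^*\otimes 1)-(T^*\otimes 1)(U^{\otimes l})^*$, obtained via $(CD)^*=D^*C^*$. In $\C[O_F^+]$ the fundamental matrix $U$, and hence every $U^{\otimes m}$, is unitary, so by inserting $(U^{\otimes l})^*U^{\otimes l}=1$ and $U^{\otimes k}(U^{\otimes k})^*=1$ between the middle factors one obtains the factorisation
\[
B_T^*=(U^{\otimes k})^*\,B_{T^*}\,(U^{\otimes l})^*,\qquad B_{T^*}:=(T^*\otimes 1)U^{\otimes l}-U^{\otimes k}(T^*\otimes 1).
\]
Since property~(3) of $\CC$ gives $T^*\in\CC(l,k)$, the object $B_{T^*}$ is itself a generator of $\J$, and the slicing remark yields $x^*\in\J$.

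For stability under $S$ I would use $(\iota\otimes S)(V)=V^{-1}$ for any finite-dimensional corepresentation $V$, so that $(\iota\otimes S)(U^{\otimes m})=(U^{\otimes m})^{-1}$. Because $T\otimes 1$ has trivial algebra factor, $\iota\otimes S$ commutes with left and right multiplication by $T\otimes 1$, and a quick rearrangement gives
\[
(\iota\otimes S)(B_T)=(T\otimes 1)(U^{\otimes k})^{-1}-(U^{\otimes l})^{-1}(T\otimes 1)=-(U^{\otimes l})^{-1}\,B_T\,(U^{\otimes k})^{-1},
\]
hence $S(x)=(\omega\otimes\iota)((\iota\otimes S)(B_T))\in\J$ by the slicing remark. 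The only conceptual step worth highlighting is essentially cosmetic: recognising that both $x^*$ and $S(x)$ can be written as slices of two-sided multiples of generators of $\J$ by elements of $\C[O_F^+]$. Once this is seen, the two-sided ideal property of $\J$ closes the argument, and no further input from the categorical machinery of Section~2 is needed; in particular, the self-duality map $R$ enters only through its already-established role in making $U$ unitary in $\C[O_F^+]$.
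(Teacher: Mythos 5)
Your proof is correct, and for the key step -- $*$-invariance -- it takes a genuinely different route from the paper's. The paper expresses $a^*$ directly as an explicit linear combination of generating slices: using $U_m^*=F_m^tU_m^t(F_m^{-1})^t$ it arrives at $a^*=\sum_{m,n}(F_l^{-1})_{im}(F_k)_{nj}\,(\omega_{mn}\otimes\iota)\bigl((\tilde T\otimes 1)U_k-U_l(\tilde T\otimes 1)\bigr)$ with $\tilde T=F_l(T^*)^tF_k^{-1}$, and the substance of the argument is verifying $\tilde T\in\CC(k,l)$; that is where the conjugation structure enters, via the operation $T\mapsto T^\vee=(F_l^{-1}TF_k)^t$ implemented by $R$ and $R^*$. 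Your factorization $B_T^*=(U^{\otimes k})^*\,B_{T^*}\,(U^{\otimes l})^*$ sidesteps this entirely: it needs only property (3) of $\CC$ (so that $T^*\in\CC(l,k)$ makes $B_{T^*}$ a generator), the unitarity of $U$ in $\C[O_F^+]$, and the two-sided ideal property of $\J$ -- all of which are already in hand. The trade-off is minor: the paper's computation exhibits $a^*$ as a linear combination of the spanning slices themselves, whereas yours only places it in the ideal they generate; but since $\J$ is simultaneously that linear span and an ideal, nothing is lost. For the antipode the two arguments essentially coincide -- the paper reduces to the $*$-case via $(\iota\otimes S)(U_m)=U_m^*$, while you conjugate by $(U^{\otimes m})^{-1}=(U^{\otimes m})^*$ directly -- and your identity $(\iota\otimes S)(B_T)=-(U^{\otimes l})^{-1}B_T(U^{\otimes k})^{-1}$ checks out because the scalar legs $T\otimes 1$ are unaffected by $S$. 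Your closing observation is also accurate: in your version $R$ enters only through the relations of $\C[O_F^+]$ that make $U$ unitary, whereas the paper additionally uses $R$ to close $\CC$ under the transpose-twist operation.
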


 \begin{proof}
We denote by $S$ the antipode of $\mathbb{C}[O_{F}^{+}]$. Since $\J$ is a bi-ideal, we only need to check that $\J$ is closed under taking the adjoints and is invariant under $S$. Let $a^{*}=(\omega
\otimes \iota)\left((T\otimes 1)U^{\otimes k}-U^{\otimes l}(T \otimes 1)\right)^{*}$.

We have to show that it lies in $\mathcal{J}$ for any $k,l\ge0$. We first note that $\CC(1,1)$ is closed 
under the operation $\vee$ defined by
$$
(\iota\otimes T)R=(T^\vee\otimes\iota)R,\ \ \text{since}\ \ T^\vee=\pm(\iota\otimes R^*)(\iota\otimes T\otimes\iota)(R\otimes\iota).
$$
As $(\iota\otimes T)r=(T^t\otimes\iota)r$, we have $T^\vee=(F^{-1}TF)^t$, and the inverse operation, still preserving $\CC(1,1)$, is $T\mapsto FT^tF^{-1}$. 
We recall from the previous section that we also have $U^*=F^{t}U^{t}(F^{-1})^{t}$. Analogous formulas hold for $T\in\CC(k,l)$. In fact, if we denote by $F_{k}=F^{\otimes k}$ and $U_{k}=U^{\otimes k}$,
 
then $T^{\vee}=(F_{l}^{-1}TF_{k})^t\in \CC(l,k)$ and  $U_{k}^{*}=F_{k}^{t}U_{k}^{t}(F_{k}^{-1})^{t}$. Therefore, choosing an orthonormal basis of $H^{\otimes k}$, $\{\xi_{j}\}_{j}$, and of
$H^{\otimes l}$, $\{\eta_{i}\}_{i}$, and assuming $\omega$ is of the form $\omega_{ij}=\left\langle\; \cdot\; \xi_{j},\eta_{i}\right\rangle$, we have
\begin{equation*}
\begin{split}
a^{*}=&(\omega_{ij}\otimes \iota)\left((T\otimes 1)U_{k}-U_{l}(T\otimes 1)\right)^{*}=\\
&(\omega_{ji}\otimes \iota)\left(U_{k}^{*}(T^{*}\otimes 1)-(T^{*}\otimes 1)U_{l}^{*}\right)=\\
&(\omega_{ji}\otimes \iota) \left(\left(((T^{*})^{t}\otimes 1)(U_{k}^{*})^{t}-(U_{l}^{*})^t((T^{*})^{t}\otimes 1)\right)^{t}\right)=\\
&(\omega_{ij}\otimes \iota)\left(((T^{*})^{t}\otimes 1)(U_{k}^{*})^{t}-(U_{l}^{*})^{t}((T^{*})^{t} \otimes 1)\right)=\\
&(\omega_{ij}\otimes \iota)\left(((T^{*})^{t}\otimes 1)(F_{k}^{-1}U_{k}F_{k})-(F_{l}^{-1}U_{l}F_{l})((T^{*})^{t} \otimes 1)\right)=\\
&(\omega_{ij}\otimes \iota)\left((F_{l}^{-1} \otimes 1)\left((F_{l}(T^{*})^{t}F_{k}^{-1}\otimes 1)U_{k}-U_{l}(F_{l}(T^{*})^{t}F_{k}^{-1} \otimes 1)\right)(F_{k} \otimes 1)\right)=\\
&\sum\limits_{m,n} (F_{l}^{-1})_{im}(F_{k})_{nj}\;(\omega_{mn}\otimes\iota) \left( (\tilde{T}\otimes\iota)U_{k}-U_{l}(\tilde{T}\otimes\iota)\right),
\end{split}
\end{equation*}
where $\tilde{T}=F_{l}(T^{*})^{t}F_{k}^{-1} \in \CC(k,l)$, since $T\mapsto F_{l}T^tF_{k}^{-1}$ is a map from $\CC(l,k)$ to  $\CC(k,l)$, being the inverse operation of $\vee$. Hence, $a^{*}\in \J$.

The invariance of $\J$  under the antipode immediately follows from its invariance under involution. If $a=(\omega_{ij}\otimes \iota )\left((T\otimes 1)U_{k}-U_{l}(T \otimes 1)\right)$ then

\begin{equation*}
\begin{split}
S(a)=&(\omega_{ij}\otimes \iota)\left((T\otimes 1)(\iota\otimes S)(U_{k})-(\iota\otimes S)(U_{l})(T\otimes 1)\right)\\
=&(\omega_{ij}\otimes \iota)\left((T\otimes 1)U_{k}^{*}- U_{l}^{*}(T\otimes 1)\right)\\
=&(\omega_{ji}\otimes \iota)\left(U_{k}(T^{*}\otimes 1)- (T^{*}\otimes 1)U_{l}\right)^{*} \in \mathcal{J}.
\end{split}
\end{equation*}

\end{proof}

Given the above Lemma, we conclude that there exists a compact quantum group $G$ such that $\mathcal{A}/\mathcal{I}\cong \mathbb{C}[G]$. It remains to show 
that $\Hom_G(H^{\otimes k},H^{\otimes l})=\CC(k,l)$. But this is done in exactly the same way as in Lemma~\ref{lorth2}. 

To finish the proof of Theorem~\ref{tmain} we have to show that  the compact quantum group $G$ is unique up to isomorphism. Let $G^\prime$ be another compact quantum group satisfying  
the assumptions of Theorem~\ref{tmain}, that is, having a  fundamental representation $V=(v_{ij})_{ij}$ on $H$ such that $\Hom_{G^{\prime}}(H^{\otimes k},H^{\otimes l})=\CC(k,l)$.
We can identify $\C[G^\prime]$ with $\A/\I^\prime$, for a bi-ideal 
$\I^\prime\subset \A$. The only thing to check is that the bi-ideal $\mathcal{I}^\prime$ is completely determined by the operator spaces $\CC(k,l)$. Since $\CC(k,l)$ and $\Hom_{G^\prime}(H^{\otimes k},
H^{\otimes l})$ coincide, from the proof of Lemma~\ref{lorth2} we see that this implies that $\mathcal{A}_{n}/\mathcal{I}^\prime_{n}=\B_{n}^*$, where $\mathcal{I}^\prime_n=\mathcal{I}^\prime \cap 
\mathcal{A}_n$ 
and 
$\mathcal{I}^\prime=\bigcup_{n\geq 0}\; \mathcal{I}^\prime_n$. Hence, the spaces $\mathcal{I}^\prime_n$ are completely determined by the spaces $\CC(k,l)$. Thus Theorem~\ref{tmain} is proved.

\bigskip

\section{General version of the Tannaka-Krein duality}

In this section we want to explain, without too many details, how using Theorem~\ref{tmain} one can recover the following result.

\begin{thm}[Woronowicz's Tannaka-Krein duality]
 Let $\mathcal{C}$ be an essentially small C$^*$-tensor category with conjugates, $\tau:\mathcal{C}\rightarrow \text{Hilb}_{f}$ be a unitary fiber functor. Then
 there exists a compact quantum group $G$ and a unitary monoidal equivalence $\theta: \mathcal{C}\rightarrow \operatorname{Rep}G$ such that
 $\tau$ is naturally unitarily monoidally isomorphic to the composition of the canonical fiber functor $\pi:\operatorname{Rep}G \rightarrow \text{Hilb}_{f}$
 with $\theta$. Furthermore, the Hopf $*$-algebra $(\mathbb{C}[G],\Delta)$ for such a $G$ is uniquely determined up to isomorphism.
\end{thm}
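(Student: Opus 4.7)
The plan is to bootstrap the general statement from Theorem~\ref{tmain}, first for finitely generated $\mathcal{C}$ and then, by a directed colimit argument, in full generality. The fiber functor $\tau$ identifies every morphism space of $\mathcal{C}$ with a subspace of $\Hom(\tau(X),\tau(Y))$, and unitarity plus monoidality automatically give the closure properties (1)--(4) of Theorem~\ref{tmain}; what is missing to apply the latter is a single self-conjugate generating object equipped with a suitable solution of the conjugate equations.

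Step 1 (finitely generated case): Assume $\mathcal{C}$ is generated, as a C$^{*}$-tensor category with conjugates, by finitely many objects $X_{1},\dots,X_{N}$. Set $X=\bigoplus_{i}(X_{i}\oplus\bar X_{i})$; this $X$ is a single generator that is self-conjugate via the obvious flip, and a standard solution $(R,R)$ of the conjugate equations for $X$, after rescaling, satisfies $(R^{*}\otimes\iota)(\iota\otimes R)=\pm\iota$. Taking $H=\tau(X)$ and defining $\CC(k,l):=\tau(\mathcal{C}(X^{\otimes k},X^{\otimes l}))\subset B(H^{\otimes k},H^{\otimes l})$, Theorem~\ref{tmain} supplies a compact quantum group $G$ together with a self-conjugate fundamental representation $U$ on $H$ such that $\Hom_{G}(H^{\otimes k},H^{\otimes l})=\CC(k,l)$ for all $k,l$.

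Step 2 (upgrading to a monoidal equivalence): Both $\mathcal{C}$ and $\operatorname{Rep}G$ are semisimple C$^{*}$-tensor categories, and each is the idempotent completion of the full tensor subcategory spanned by the tensor powers of the generator ($X^{\otimes k}$ on one side, $U^{\otimes k}$ on the other); the latter statement uses Woronowicz's fact that every irreducible representation of $G$ occurs inside some $U^{\otimes k}$, which in turn follows from the construction of $\C[G]$ as a quotient of $\A$. The assignment $X^{\otimes k}\mapsto U^{\otimes k}$ defines a strict unitary monoidal functor between these skeletal subcategories that is fully faithful by Step~1, so it extends uniquely to a fully faithful, essentially surjective, unitary monoidal functor $\theta:\mathcal{C}\to\operatorname{Rep}G$. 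The natural isomorphism $\tau\cong\pi\circ\theta$ is tautological on the generator and extends to all objects by naturality.

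Step 3 (general case via directed colimit): For an arbitrary essentially small $\mathcal{C}$, write it as the directed union of its finitely generated full C$^{*}$-tensor subcategories $\mathcal{C}_{\alpha}$ that are closed under conjugates and subobjects. Step~2 produces for each $\mathcal{C}_{\alpha}$ a Hopf $*$-algebra $\C[G_{\alpha}]$ together with a unitary monoidal equivalence $\theta_{\alpha}$. Whenever $\mathcal{C}_{\alpha}\subset\mathcal{C}_{\beta}$, the uniqueness part of Theorem~\ref{tmain}, applied to both the generator of $\mathcal{C}_{\alpha}$ inside $\mathcal{C}_{\beta}$, yields a canonical injective Hopf $*$-algebra morphism $\C[G_{\alpha}]\hookrightarrow\C[G_{\beta}]$ corresponding to the realisation of $G_{\alpha}$ as a quantum quotient of $G_{\beta}$. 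Setting $\C[G]:=\varinjlim_{\alpha}\C[G_{\alpha}]$ and transporting the Hopf $*$-structure, the equivalences $\theta_{\alpha}$ and the natural isomorphisms $\tau|_{\mathcal{C}_{\alpha}}\cong\pi\circ\theta_{\alpha}$ glue to the desired $\theta$ and natural isomorphism. Uniqueness of $\C[G]$ follows stage by stage, exactly as in the last paragraph of Section~3: the bi-ideal in the appropriate tensor algebra is determined by the morphism spaces of $\mathcal{C}$.

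The main obstacle I expect is Step~2, namely verifying that the idempotent completion of the full subcategory spanned by $\{X^{\otimes k}\}$ really is all of $\mathcal{C}$ (for $\mathcal{C}$) and all of $\operatorname{Rep}G$ (for $\operatorname{Rep}G$), and checking that the extension of the strict monoidal functor to idempotent completions is genuinely unitary and monoidal, not merely linear and functorial. These checks rely essentially on the semisimplicity of C$^{*}$-tensor categories with conjugates and on $\tau$ preserving the canonical inner product on morphism spaces; once they are in place, the directed colimit of Step~3 is essentially formal.
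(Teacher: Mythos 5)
Your proposal follows essentially the same route as the paper: reduce to a single self-conjugate generator by passing to $\bigoplus_i(X_i\oplus\bar X_i)$ with $R$ built from a solution of the conjugate equations, apply Theorem~\ref{tmain}, and handle the infinitely generated case as an inductive limit of Hopf $*$-algebras over finite generating subsets, using the uniqueness part of Theorem~\ref{tmain} to get the connecting embeddings. Your Step~2 spells out the idempotent-completion argument that the paper leaves implicit, but the decomposition and the key inputs are the same.
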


We remark that for C$^*$-tensor categories we follow the conventions of~\cite{NT}, in particular, we assume that they are closed under finite direct sums and subobjects.

\smallskip

We concentrate only on the existence of $G$. We may assume that $\CC$ is a subcategory of $\text{Hilb}_{f}$ and $\tau$ is the embedding functor. If there exist an object $H$ in $\CC$ such that any 
other object is isomorphic to a subobject of $H^{\otimes n}$ for some $n\ge0$, and a morphism $R\colon\C\to H\otimes H$ such that $(R^*\otimes\iota)(\iota\otimes R)=\pm\iota$ , then the result follows 
from Theorem~\ref{tmain}. For general $\CC$ let us distinguish between three cases:
\begin{enumerate}[$(i)$]
\item $\mathcal{C}$ is generated, as a C$^*$-tensor category with conjugates, by one object;
\item $\mathcal{C}$ is generated by a finite number of objects;
\item $\mathcal{C}$ is infinitely generated.
\end{enumerate}

$(i)$ Assume $\mathcal{C}$ is generated by one object $K$, so every object of $\CC$ is isomorphic to a subobject of a tensor product of copies of $K$ and an object $\bar K$ conjugate to $K$. Let $(R',
\bar R')$ be a solution
 of the conjugate equations for $K$ and $\bar{K}$. Then letting $H=K\oplus \bar K$ and $R=R'\oplus\bar R'$, considered as a morphism $\C\to H\otimes H$, we have $(R^*\otimes\iota)(\iota\otimes R)=
 \iota$,
 so we are back to the case covered by Theorem~\ref{tmain}.

$(ii)$ The case when $\CC$ is generated by a finite number of objects $H_1,\dots,H_n$ is not much different from $(i)$, as then $\CC$ is generated by $H_1\oplus\dots\oplus H_n$.

$(iii)$ For general $\CC$, choose a generating set $\mathcal F$ in $\CC$ and let $\mathcal{E}$ be the family of finite subsets of $\mathcal F$ ordered by inclusion. For each $E\in \mathcal{E}$ let 
$\mathcal{C}_{E}$ be the full rigid
C$^*$-tensor subcategory of $\mathcal{C}$ generated by the finite set of objects in $E$. By
the previous case, for each subcategory $\mathcal{C}_{E}$ we get a compact
quantum group $G_{E}$ with representation category $\mathcal{C}_{E}$. Moreover, if $E\subset E'$, then, since $\CC_E\subset\CC_{E'}$, by the uniqueness part of Theorem~\ref{tmain}, the quantum group 
$G_E$ is a quotient of $G_{E'}$, that is, we have an embedding $\C[G_E]\hookrightarrow\C[G_{E'}]$ of Hopf $*$-algebras. Then $\C[G]$ is defined as the inductive limit of the Hopf $*$-algebras $\C[G_E]
$.
\medskip

In the following example we can see how to recover the free unitary quantum group following the procedure explained in point $(i)$ of the above.
\begin{example}[Free Unitary quantum group]
We denote by $\C[U_{Q}^+]$ the universal unital $*$-algebra  generated by the entries of matrices $V=(v_{ij})_{i,j} $ and  $\bar V=(\bar v_{ij})_{i,j}$ such that $V$  and $\bar V$ are unitary with 
involution defined by  $V^*=Q^t \bar V^t (Q^{-1})^t$ and $\bar V^*= (Q^{-1})^* V^t Q^*$, for an invertible $n$ by $n$ matrix $Q$. 
The algebra $\C[U_Q^+]$ is a Hopf $*$-algebra with comultiplication $\Delta(v_{ij})=\sum_k v_{ik}\otimes v_{kj}$ and  $U_Q^+$ is known in literature as the free unitary quantum group.

We wish to prove the equivalence between the representation category of the free unitary quantum group and a concrete C$^*$-tensor category having certain properties. More specifically, 
 consider the  Hilbert space $K=\C^n$ and its complex conjugate $\bar{K}$. Let  $\CC_Q$ be the smallest collection of operators between tensor powers 
of $H:=K\oplus \bar K$, as in Theorem~\ref{tmain}, containing the operator $R\colon\C\to H^{\otimes 2}$ such that $(R^*\otimes \iota)(\iota\otimes R)=\iota$,  and the projection
$p\colon K\oplus \bar K\to K$. The operator $R$ is equal to $(\iota \otimes F)r$ for $F\in GL_{2}(M_n(\C))$ with entries $F_{11}=F_{22}=0$, $F_{12}=\bar Q^{-1}$ and $F_{21}=Q$, where 
$\bar{Q}$  is the matrix whose coefficients are the complex conjugates of the entries of $Q$.

We claim that $\Hom_{U_Q^+}(H^{\otimes k},H^{\otimes l})=\CC_Q(k,l)$ for all $k,l\ge0$. We  show that  $\C[U_Q^+]\cong \A/\mathcal{I}$, where $\mathcal{I}$ is the ideal generated by slice maps 
$ (\omega\otimes \iota)\left((T\otimes 1)U^{\otimes k}-U^{\otimes l}(T \otimes 1)\right)$, for $T\in \CC_Q(k,l)$ and $\omega\in B(H^{\otimes k},H^{\otimes l})^*$. The claim will then follow from 
Theorem~\ref{tmain}.
By definition, $\C[U_Q^+]$ can be written as $\A/\mathcal{L}$ where $\mathcal{L}$ is the ideal generated by the relations
$$UF^{t}U^{t}(F^{-1})^{t}=1,\ \ F^tU^{t}(F^{-1})^{t}U=1,\ \ U_{12}=0,\ \ U_{21}=0.   $$
We already know that the ideal $\I$ contains slices of the first two relations, since we showed in Lemma~\ref{lorth} that they correspond to the slice maps $$(\omega_{1,ij}\otimes \iota)
\left((R\otimes \iota)-U^{\otimes 2}(R\otimes \iota) \right)_{i,j},\ \ \ \ (\omega_{ij,1}\otimes \iota)\left((R^{*}\otimes \iota)U^{\otimes 2}-(R^{*}\otimes \iota) \right)_{i,j}.$$
The other two relations correspond to $(\omega_{ij}\otimes \iota)\left( (p\otimes \iota)U-U(p\otimes \iota)\right)_{i,j}$. Hence $\mathcal{L}\subseteq \I$.
The opposite inclusion follows analogously to the second part of the proof of Lemma~\ref{lorth}.

\end{example}

\bigskip

\bigskip

\end{document}